\documentclass[a4paper,11pt]{amsart}


\usepackage{hyperref}

\usepackage{graphicx}
\usepackage{amssymb}
\usepackage{latexsym}
\usepackage{amsfonts}
\usepackage{amsmath}
\usepackage{amsthm}
\usepackage{amssymb}
\usepackage{mathrsfs}

\newtheorem{theorem}{Theorem}[section]
\newtheorem{lemma}[theorem]{Lemma}
\newtheorem{proposition}[theorem]{Proposition}
\newtheorem{corollary}[theorem]{Corollary}

\newtheorem{example}[theorem]{Example}

\newtheorem{remark}[theorem]{Remark}

\newcommand{\C}{\mbb{C}}

\newcommand{\QQ}{\mc{Q}}

\newcommand{\R}{\mbb{R}}

\newcommand{\mbb}{\mathbb}
\newcommand{\mc}{\mathcal}

\newcommand{\cS}{\mbb{S}}

\newcommand{\q}{\mbb{H}}

\newcommand{\punto}{\boldsymbol{\cdot}}

\begin{document}

\markboth{Riccardo Ghiloni - Alessandro Perotti}
{Lagrange polynomials over Clifford numbers}

%
%

\title{Lagrange polynomials over Clifford numbers}

\author{RICCARDO GHILONI}

\email{ghiloni@science.unitn.it}

\author{ALESSANDRO PEROTTI}

\email{perotti@science.unitn.it}
\thanks{Work partially supported by GNSAGA of INdAM, MIUR-PRIN project ``Variet\`a reali e complesse: geometria, topologia e analisi armonica" and MIUR-FIRB project ``Geometria Differenziale e Teoria Geometrica delle Funzioni"}
\address{Department of Mathematics, University of Trento, I--38123, Povo-Trento, Italy}

\keywords{Lagrange polynomials; Clifford algebras; Quaternions.}

\subjclass[2000]{11R52, 15A66, 30G35, 65D05}

\maketitle


\begin{abstract}
We construct Lagrange interpolating polynomials for a set of points and values belonging to the algebra of real quaternions $\q \simeq \R_{0,2}$, or to the real Clifford algebra $\R_{0,3}$. In the quaternionic case, the approach by means of Lagrange polynomials is new, and gives a complete solution of the interpolation problem. In the case of $\R_{0,3}$, such a problem is dealt with here for the first time. Elements of the recent theory of slice regular functions are used. Leaving apart the classical cases $\R_{0,0} \simeq\R$, $\R_{0,1}\simeq\C$ and the trivial case $\R_{1,0} \simeq \R \oplus \R$, the interpolation problem on Clifford algebras $\R_{p,q}$ with $(p,q) \neq (0,2),(0,3)$ seems to have some intrinsic difficulties. 
\end{abstract}

\section{Introduction}
The aim of this work is to define Lagrange interpolating polynomials for a set of points and values belonging to a real Clifford algebra. We make some preliminary considerations to select the Clifford algebras on which the construction can be performed. We then restrict to two cases, the Clifford algebra of signature $(0,2)$ (isomorphic to the algebra of real quaternions) and the one of signature $(0,3)$. 

Let $\R_{p,q}$ denote the real Clifford algebra with signature $(p,q)$, equipped with the usual Clifford anti-involution $x\mapsto x^c$ defined by
\[
x^c=([x]_0+[x]_1+[x]_2+[x]_3+[x]_4+\cdots)^c=[x]_0-[x]_1-[x]_2+[x]_3+[x]_4-\cdots,
\]
where $[x]_k$ denotes the $k$--vector component of $x\in \R_{p,q}$ (cf.\ e.g.\ \cite[\S4.1]{CSSSbook} or \cite[\S 3.2]{GHS}). 
For every element $x$ of $\R_{p,q}$, the \emph{trace} of $x$ is $t(x):=x+x^c$ and the (squared) \emph{norm} of $x$ is
$n(x):=xx^c$.  
Let $m:=p+q$. An element $x$ of $\R_{p,q}$ can be represented in the form $x=\sum_K x_Ke_K$, with $K=(i_1,\ldots,i_k)$ an increasing multiindex of length $k$, $0\le k \le m$, $e_K=e_{i_1}\cdots e_{i_k}$, $e_\emptyset=1$,  $x_K\in\R$, $x_\emptyset=x_0$, $e_1,\ldots,e_m$ basis elements (with $e_i^2=1$ for $i\le p$,  $e_i^2=-1$ for $i>p$). The (real vector) subspace  generated by $1,e_1,e_2,\ldots,e_m$ is called the set of \emph{para\-vectors} in $\R_{p,q}$ and denoted by $\R^{(m+1)}$. We identify the field of real numbers with the subspace of $\R_{p,q}$ generated by the unit of the algebra.

In a non--commutative setting, the ring of polynomials is usually defined by fixing the position of the coefficients w.r.t.\  the indeterminate $X$ (e.g.\ on the right) and by imposing commutativity of $X$ with the coefficients when two polynomials are multiplied together (cf.\ e.g.\ \cite[\S 16]{Lam}). Given two polynomials $P(X)$ and $Q(X)$, let $P\punto Q$ denote the product obtained in the way we just described. 
If $P$ has \emph{real} coefficients, then $(P\punto Q)(x)=P(x)Q(x)$.
In general, a direct computation (cf.~\cite[\S 16.3]{Lam}) shows that if $P(x)$ is invertible, then
\begin{equation}\label{product}
(P\punto Q)(x)=P(x)Q(P(x)^{-1}xP(x)).
\end{equation}

In this setting, a {(left) root} of a polynomial $P(X)=\sum_{h=0}^dX^h a_h$ is an element $x\in\R_{p,q}$ such that $P(x)=\textstyle\sum_{h=0}^dx^h a_h=0$. As shown in \cite{GhPe_AIM} and \cite{GhPe_Trends}, in order to obtain a good structure for the zero locus of a polynomial,  it is necessary to restrict the domain where roots are looked for, and to impose some conditions on the polynomial. We recall from \cite{GhPe_AIM} the definition 
of the \emph{quadratic cone} $\QQ_{p,q}$ of $\R_{p,q}$:
\[\QQ_{p,q}:=\R\cup\{x\in\R_{p,q}\;|\; t(x)\in\R,n(x)\in\R,4n(x)>t(x)^2\}.\]
The quadratic cone coincides with the whole Clifford algebra  only when $\R_{p,q}$ is a division algebra, i.e.\ for $\R_{0,0}\simeq\R$, $\R_{0,1}\simeq\C$ and $\R_{0,2}\simeq\q$.
If we restrict roots to the quadratic cone, then an \emph{admissible} polynomial $P(X)$ with Clifford coefficients (cf.~\cite{GhPe_AIM} for this notion) satisfies a version of the Fundamental Theorem of Algebra.

In the construction of Lagrange polynomials for points $x_1,\ldots, x_n$, we are lead to consider only elements $x_i$ in the quadratic cone.
Moreover, the procedure requires the invertibility of differences of the form $x_i-x_j$ ($i\ne j$), and also of the form $x_i'-x_j'$, with $x_i'$ and $x_j'$ in the same conjugacy classes of $x_i$ and $x_j$, respectively.

These conditions impose severe restrictions on the Clifford algebras in which the procedure can be done. In every Clifford algebra $\R_{p,q}$ with $p\ge2$, one can find elements $x,y\in \QQ_{p,q}$, not belonging to the same conjugacy class, such that the difference $x-y$ is non--invertible (e.g.\ $x=e_{12}$ and $y=1/3e_1+2/3e_{12}$).
Due to the isomorphism between $\R_{2,0}$ and $\R_{1,1}$, also in the algebras $\R_{1,q}$, with $q\ge1$, one can find pairs of elements with the same properties. As we will see below, this fact has consequences also on the number of roots of polynomials, and therefore on the uniqueness of interpolating polynomials.

In $\R_{1,0}\simeq\R\oplus\R$,  the quadratic cone reduces to the real line $\R$, where the construction of the Lagrange polynomials is well--known. Therefore, leaving apart the classical cases $\R_{0,0}\simeq\R$, $\R_{0,1}\simeq\C$, we are left with the algebras $\R_{0,q}$, with $q\ge2$. In this case, the quadratic cone is simply
\[\QQ_{0,q}=\R\cup\{x\in\R_{0,q}\;|\; t(x)\in\R,n(x)\in\R\}.\]
Let $\cS$ denote the set of square roots of $-1$ in $\QQ_{0,q}$. $\cS$ is the set of elements $J$ such that $t(J)=0$, $n(J)=1$.  Every $x\in\QQ_{0,q}$ has a decomposition $x=\alpha+\beta J$ with $\alpha,\beta\in\R$, $\beta\ge0$ and $J\in\cS$. 

For $q=2,3$, it can be shown that the conjugacy class of $y=\alpha+\beta J\in\QQ_{0,q}$ is the set $\cS_y=\{\alpha+\beta K\;|\;K\in\cS\}\subset\QQ_{0,q}$. This comes from the fact that $\cS$ forms a unique conjugacy class (cf.\ \cite{HHA}).  Since $t(y)=2\alpha$, $n(y)=\alpha^2+\beta^2$, $\cS_x=\cS_y$ if and only if $t(x)=t(y)$ and $n(x)=n(y)$.  The set $\cS_y$  coincides with the zero locus in $\QQ_{0,q}$  of the \emph{characteristic polynomial} of $y$, i.e.\ the polynomial with real coefficients
\[\Delta_y(X):=(X-y)\cdot (X-y^c)=X^2-X t(y)+n(y).\]
If $q\ge4$, the conjugacy class of an element $x\in\QQ_{0,q}$ is not necessarily contained in the quadratic cone. For example, the class of $e_4$ contains also $(2+e_{123})^{-1}e_4(2+e_{123})=5/3e_4-4/3 e_{1234}\notin\QQ_{0,q}$.
For this reason, we will restrict to the cases $q=2,3$.

\section{Preliminary results}

For $q=2$ (the quaternionic case), the trace and the norm of an element are always real. For $q=3$, $t$ and $n$ take values in the center of the algebra, i.e.\ in the subspace generated by $1$ and $e_{123}$: $t(x)=2(x_0+x_{123})$, $n(x)=|x|^2 +\phi(x)e_{123}$, where $|x|=(x\cdot x)^{1/2}$ is the euclidean norm of $x\in\R_{0,3}\simeq\R^8$ and $\phi(x)=x\cdot (xe_{123})=2(x_0x_{123}-x_1x_{23}+x_2x_{13}-x_3x_{12})$. Since $\phi(x)=\phi(x^c)$, we have that $n(x)=n(x^c)$ for each $x\in\R_{0,3}$.
We have $\QQ_{0,2}=\R_{0,2}\simeq\q$,  $\QQ_{0,3}=\{x\in\R_{0,3}\;|\; x_{123}=0,\phi(x)=0\}$. 


In $\q$, every non--zero element is invertible. In $\R_{0,3}$, there are non--invertible non--zero elements.

\begin{proposition}\label{pro1}
Let $x\in\R_{0,3}$. Then the following facts hold:
\begin{enumerate}
\item[$1.$]
$x$ is invertible if and only if its norm $n(x)$ is invertible.
\item[$2.$]
$x$ is invertible if and only if $\psi_+(x)\psi_-(x)\ne0$, where
\[\psi_\pm(x)=(x_0\pm x_{123})^2+(x_1\mp x_{23})^2+(x_2\pm x_{13})^2+(x_3\mp x_{12})^2.
\]
\end{enumerate}
\end{proposition}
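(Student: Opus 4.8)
The plan is to base everything on the structure of the center $Z:=\R\oplus\R e_{123}$ of $\R_{0,3}$, which contains $n(x)$ for every $x$. First I would collect the elementary facts that make part~$1$ work. Since $c$ is an anti-involution with $(x^c)^c=x$, the element $x$ is invertible precisely when $x^c$ is, with $(x^c)^{-1}=(x^{-1})^c$; and $n(x)=x x^c=x^c x$, the last equality because $n(x^c)=x^c(x^c)^c=x^c x$ while $n(x^c)=n(x)$ as already observed above. Granting this, part~$1$ is immediate: if $x$ is invertible then $n(x)=x x^c$ is a product of invertibles, hence invertible; conversely, if $n(x)$ is invertible, then --- using that $n(x)$, and therefore $n(x)^{-1}$, is central --- the elements $x^c n(x)^{-1}$ and $n(x)^{-1}x^c$ are a right and a left inverse of $x$ respectively, so $x$ is invertible, with $x^{-1}=n(x)^{-1}x^c$.

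For part~$2$ I would determine exactly when the central element $n(x)$ is invertible. A short direct computation (or the standard formula for the square of a pseudoscalar) gives $e_{123}^2=+1$, so $Z$ carries the orthogonal idempotents $e_\pm:=\tfrac12(1\pm e_{123})$, with $e_+e_-=0$ and $e_++e_-=1$; writing $a+b\,e_{123}=(a+b)e_++(a-b)e_-$ one sees that a central element $a+b\,e_{123}$ is invertible in $\R_{0,3}$ iff $a+b\neq0$ and $a-b\neq0$. Indeed, if both hold then $(a+b)^{-1}e_++(a-b)^{-1}e_-$ is a genuine two-sided inverse, since $e_\pm$ are orthogonal idempotents summing to $1$; if one of them vanishes, the element annihilates the corresponding idempotent and is a zero divisor. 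Applying this to $n(x)=|x|^2+\phi(x)\,e_{123}$ shows that $n(x)$ is invertible iff $|x|^2+\phi(x)\neq0$ and $|x|^2-\phi(x)\neq0$.

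It then remains only to match these two quantities with $\psi_\pm(x)$. This is a routine expansion in the eight real coordinates $x_0,x_1,x_2,x_3,x_{12},x_{13},x_{23},x_{123}$: completing the four squares in $\psi_+(x)$ produces the sum of all eight squares, namely $|x|^2$, plus the cross terms $2(x_0x_{123}-x_1x_{23}+x_2x_{13}-x_3x_{12})=\phi(x)$, so $\psi_+(x)=|x|^2+\phi(x)$ and likewise $\psi_-(x)=|x|^2-\phi(x)$. Combining with part~$1$: $x$ is invertible $\iff n(x)$ is invertible $\iff \psi_+(x)\neq0$ and $\psi_-(x)\neq0\iff \psi_+(x)\psi_-(x)\neq0$, the last step because $\psi_\pm(x)$ are real. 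I do not anticipate a serious obstacle; the one point deserving care is the sign bookkeeping --- that Clifford conjugation \emph{fixes} the $3$-vector $e_{123}$ and that $e_{123}^2=+1$ rather than $-1$, so that $Z$ splits as $\R\oplus\R$ instead of being a field. This splitting is precisely the mechanism by which $\R_{0,3}$ acquires nonzero non-invertible elements, in contrast with $\q$.
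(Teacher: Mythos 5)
Your proof is correct and follows essentially the same route as the paper: reduce invertibility of $x$ to invertibility of the central element $n(x)$, use that the center $\R\oplus\R e_{123}$ splits as $\R\oplus\R$ (so $a+be_{123}$ is invertible iff $a^2-b^2\ne0$), and identify $\psi_\pm(x)=|x|^2\pm\phi(x)$. The only cosmetic differences are your explicit idempotent decomposition of the center and your direct coordinate expansion of $\psi_\pm$, where the paper instead writes $\psi_+(x)=\tfrac12|x+xe_{123}|^2$.
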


\begin{proof}
1.\ If $x$ is invertible, then $(x^{-1})^c=(x^c)^{-1}$ and $(x^{-1})^cx^{-1}$ is the inverse of $n(x)=xx^c$. Conversely, if $n(x)$ is invertible, then $n(x)^{-1}x^c=x^c n(x)^{-1}$ is the inverse of $x$.

2.\ The center of $\R_{0,3}$ is isomorphic to the algebra $\R\oplus\R$. Therefore, an element $a+be_{123}$ is invertible if and only if $a^2-b^2\ne0$. From the first part it follows that $x\in\R_{0,3}$ is invertible if and only if $|x|^4-\phi(x)^2\ne0$. The thesis follows from the fact that  
$\psi_+(x)=\frac12|x+xe_{123}|^2=|x|^2+\,x\cdot (xe_{123})=|x|^2+\phi(x)$ and $\psi_-(x)=|x|^2-\phi(x)$.
\end{proof}

 In $\QQ_{0,3}$, there exist distinct elements whose difference is not invertible. For example, $e_1,e_{23}$ belong to $\cS$ but $e_1-e_{23}$ is not invertible.

\begin{proposition}\label{invertible}
Let $x,y\in\QQ_{0,3}$, with $\cS_x\ne\cS_y$. Then $x-y$ is invertible.
\end{proposition}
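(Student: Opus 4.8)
The plan is to argue by contradiction: assuming $x-y$ is not invertible, I will deduce $t(x)=t(y)$ and $n(x)=n(y)$, hence $\cS_x=\cS_y$, against the hypothesis. Put $w:=x-y$ and note $w\neq0$ (otherwise $\cS_x=\cS_y$). From the computations in the proof of Proposition~\ref{pro1} one has $\psi_\pm(w)=|w|^2\pm\phi(w)=\tfrac12\,|w\pm we_{123}|^2$, so by Proposition~\ref{pro1}(2) the non-invertibility of $w$ means $we_{123}=-w$ or $we_{123}=w$; equivalently, exactly one of the two summands $e_\pm w$ of $w$ vanishes, where $e_\pm:=\tfrac12(1\pm e_{123})$ are the central idempotents of $\R_{0,3}$ and $e_+\R_{0,3}\simeq e_-\R_{0,3}\simeq\q$. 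Moreover the two conditions $we_{123}=\pm w$ are swapped by the algebra automorphism $\sigma$ of $\R_{0,3}$ fixing $e_2,e_3$ and sending $e_1$ to $-e_1$: this $\sigma$ is a Euclidean isometry carrying $e_{123}$ to $-e_{123}$, it maps $\QQ_{0,3}$ onto itself and preserves $t$ and $n$ there, hence preserves conjugacy classes. So it is enough to treat the case $we_{123}=-w$, i.e.\ $e_+w=0$, i.e.\ $e_+x=e_+y$.

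The key observation is that, for every $z\in\QQ_{0,3}$, the real numbers $t(z)$ and $n(z)$ are already determined by the single quaternion $e_+z\in e_+\R_{0,3}\simeq\q$ — namely $t(z)$ is twice its real part and $n(z)$ its squared norm — because $t(z),n(z)\in\R$ while Clifford conjugation preserves the ideal $e_+\R_{0,3}$ and restricts there to quaternionic conjugation. Hence $e_+x=e_+y$ forces $t(x)=t(y)$ and $n(x)=n(y)$, i.e.\ $\cS_x=\cS_y$, the desired contradiction; so $x-y$ is invertible. One can also see this directly in the eight real coordinates: since $x_{123}=y_{123}=0$, the relation $we_{123}=-w$ forces $w_0=0$ (whence $t(x)=2x_0=2y_0=t(y)$) together with $w_1=w_{23}$, $w_2=-w_{13}$, $w_3=w_{12}$; then, using $n(x)=|x|^2$ and $n(y)=|y|^2$,
\[
|x|^2-|y|^2=2\bigl(w_1(y_1+y_{23})+w_2(y_2-y_{13})+w_3(y_3+y_{12})+w_1^2+w_2^2+w_3^2\bigr),
\]
and expanding $0=\phi(x)=\phi(y+w)$ and subtracting $0=\phi(y)$ shows the parenthesis is zero, so $n(x)=n(y)$.

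The only delicate step is this last one — that the ``forbidden'' linear position of $w=x-y$ is compatible with $x,y\in\QQ_{0,3}$ only when their norms already agree. This is precisely where the defining condition $\phi=0$ of $\QQ_{0,3}$ (not just formal membership in the quadratic cone) is used; everything else is bookkeeping, and the automorphism $\sigma$ is there only to avoid repeating it in the mirror-image case $we_{123}=w$.
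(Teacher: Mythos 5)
Your proof is correct and follows essentially the same route as the paper: both reduce non-invertibility of $x-y$ to $\psi_{\pm}(x-y)=0$ via Proposition~\ref{pro1}(2) and then deduce $t(x)=t(y)$ and $n(x)=n(y)$, using $\phi=0$ on $\QQ_{0,3}$ to identify $\psi_{\pm}$ with $n$. Your ``key observation'' about $e_{+}z$ determining $t(z)$ and $n(z)$ is precisely the $\R_{0,3}\simeq\q\oplus\q$ viewpoint already noted in the remark following the proposition, and your coordinate verification of $n(x)=n(y)$ checks out.
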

\begin{proof}
If $x-y$ is not invertible, from Proposition~\ref{pro1} we get $\psi_+(x-y)=0$ or $\psi_-(x-y)=0$. Assume $\psi_+(x-y)=0$. Then $x_0+x_{123}=y_0+y_{123}$ and $\psi_+(x)=\psi_+(y)$. Since $x,y\in\QQ_{0,3}$, $\phi(x)=\phi(y)=0$. It follows that $n(x)=|x|^2=\psi_+(x)=n(y)$. 
Moreover,  $x_{123}=y_{123}=0$. Therefore $t(x)=2x_0=2y_0=t(y)$. But then $x$ and $y$ are in the same conjugacy class. The same conclusion is obtained if $\psi_-(x-y)=0$.
\end{proof}

\begin{remark}
The previous results can be obtained also by using an explicit form of the isomorphism $\R_{0,3}\simeq\q\oplus\q$ (cf.\ for example \cite{GHS} for such an isomorphism).
\end{remark}

Let $V(P)$ denote the set of roots of a polynomial $P(X)=\sum_{h=0}^dX^h a_h$ belonging to the quadratic cone $\QQ_{0,q}$:
\[V(P)=\{x\in\QQ_{0,q}\;|\; P(x)=\textstyle\sum_{h=0}^dx^h a_h=0\}.\]

We now prove an analogue of the Gordon--Motzkin Theorem (\cite{GordonMotzkin}, see also \cite[\S 16.4]{Lam}), which in his original form is valid for polynomials over division rings (e.g.\ over the quaternions).

\begin{theorem}\label{GordonMotzkin}
Let $P(X)=\sum_{h=0}^dX^h a_h$ be a polynomial of positive degree $d$, with coefficients $a_h\in\R_{0,3}$. Then its roots  belong to at most $d$ distinct conjugacy classes in $\QQ_{0,3}$.
\end{theorem}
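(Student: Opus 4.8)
The plan is to transpose the classical proof of the Gordon--Motzkin Theorem, running an induction on the degree $d\ge0$ (the case $d=0$, a nonzero constant with empty zero set, being trivial, so the statement of the theorem for $d\ge1$ follows). Two features of the division-ring argument are used: a factorization of $P$ by the linear factor attached to a root, and the invertibility of differences of roots lying in different conjugacy classes. The first holds over any $\R_{p,q}$; the second, which fails in general, is exactly Proposition~\ref{invertible} once we stay inside the quadratic cone.

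First I would record a factor theorem. Since in the $\punto$-product the indeterminate commutes with the coefficients, the identity $X^h-a^h=(X-a)\punto\big(X^{h-1}+X^{h-2}a+\cdots+a^{h-1}\big)$ holds for every $a\in\R_{0,3}$, and summing $\big(X^h-a^h\big)a_h$ over $h$ gives
\[
P(X)=(X-a)\punto Q(X)+P(a),\qquad Q(X)=\sum_{h=1}^d\Big(\sum_{j=0}^{h-1}X^ja^{h-1-j}\Big)a_h .
\]
The polynomial $Q$ has degree $d-1$ with leading coefficient $a_d\ne0$. In particular, if $a\in\QQ_{0,3}$ is a root of $P$, then $P=(X-a)\punto Q$ with $\deg Q=d-1$.

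For the inductive step, let $d\ge1$ and assume the claim for degree $d-1$. If $V(P)=\emptyset$ there is nothing to prove, so fix $a\in V(P)$ and factor $P=(X-a)\punto Q$ as above. Let $c\in V(P)$ with $\cS_c\ne\cS_a$. By Proposition~\ref{invertible} the element $c-a$ is invertible, so formula~\eqref{product} applies to the factor $X-a$ and yields
\[
0=P(c)=(c-a)\,Q\big((c-a)^{-1}c(c-a)\big),
\]
hence $c':=(c-a)^{-1}c(c-a)$ is a root of $Q$. Now $c'$ belongs to the conjugacy class of $c$, which (for $q=3$) equals $\cS_c$ and is contained in $\QQ_{0,3}$; thus $c'\in V(Q)$ and $\cS_{c'}=\cS_c$. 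Therefore every conjugacy class meeting $V(P)$ is either $\cS_a$ or one of the at most $d-1$ classes meeting $V(Q)$, so $V(P)$ is contained in at most $d$ conjugacy classes, as wanted.

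The only point requiring care is the use of formula~\eqref{product}: it is available only because $c-a$ is invertible, which is why the whole argument is confined to the quadratic cone and why Proposition~\ref{invertible} is essential --- over a generic Clifford algebra, or even over $\R_{0,3}$ without the cone restriction, two roots in distinct classes may have a non-invertible difference and the count of conjugacy classes is no longer bounded by the degree. The remaining verifications (the factor theorem identity, the equality $\deg Q=d-1$, and $c'\in\QQ_{0,3}$) are routine.
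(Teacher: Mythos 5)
Your proof is correct and follows essentially the same route as the paper: induction on the degree, a left factorization $P=(X-a)\punto Q$ via the non-commutative remainder theorem, invertibility of $c-a$ from Proposition~\ref{invertible}, and formula~\eqref{product} to transfer the root $c$ to a conjugate root $c'\in\cS_c\cap V(Q)$, using that for $q=3$ conjugacy classes of elements of $\QQ_{0,3}$ stay inside the cone. The only cosmetic differences are that you start the induction at $d=0$ (where the paper starts at $d=1$, handling that case directly with Proposition~\ref{invertible}) and that you write out the remainder-theorem identity explicitly rather than citing it.
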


\begin{proof}
We proceed by induction on $d$. If $d=1$, and $xa_1+a_0=ya_1+a_0=0$, with $x,y\in\QQ_{0,3}$, then  $(x-y)a_1=0$. From Proposition~\ref{invertible}, it follows that $x$ and $y$ must belong to the same conjugacy class. For $d\ge2$, let $y\in V(P)$.
Applying the non--commutative version of the Remainder Theorem (cf.~\cite[\S 16.2]{Lam}), we can find a polynomial $Q(X)=\sum_{h=0}^{d-1}X^hb_h$, of degree $d-1$, such that
\[P(X)=(X-y)\punto Q(X).\]
If $x\in V(P)$, with $\cS_x\ne\cS_y$, then $a:=x-y$ is invertible from Proposition~\ref{invertible}. Since $0=P(x)=aQ(a^{-1}xa)$, we get $Q(a^{-1}xa)=0$. 
Therefore $x':=a^{-1}xa\in\cS_x\cap V(Q)$.
From the inductive hypothesis, $x$ belongs to the union of at most $d-1$ conjugacy classes. Therefore the roots of $P$ belong to at most $d$ conjugacy classes.
\end{proof}

It is not clear if the preceding theorem holds on every Clifford algebra $\R_{0,q}$.  The same proof can not be repeated, since for $q>3$ the element $x'=a^{-1}xa$ does not necessarily belongs to the quadratic cone, and then Proposition~\ref{invertible} can not be applied. Surely the result is not valid on Clifford algebras $\R_{p,q}$ with $p\ge2$: for example, the degree one polynomial 
$X(e_1-e_{12})+e_2-1$
has roots $e_{12}$ and $1/3e_1+2/3e_{12}$ in the quadratic cone, belonging to two distinct 
conjugacy classes.

The preceding theorem is valid in any Clifford algebra $\R_{0,q}$ for \emph{admissible} (see \cite{GhPe_AIM}) polynomials.  
In particular, it holds for polynomials with para\-vector coefficients, a case considered in \cite{YangQianActa}.

A polynomial of degree $d$ can have more than $d$ roots if two or more of these are allowed to belong to the same conjugacy class. In $\R_{0,3}$, this can happen also for degree one polynomials. For example, the (not admissible) polynomial $X(e_1+e_{23})+1-e_{123}$ has two distinct roots $e_1$ and $e_{23}$ in the same conjugacy class $\cS$. Observe that $e_2\in\cS$ but is not a root of the polynomial.

Since the quadratic cone $\QQ_{0,3}$ contains the paravector space $\R^{(4)}$, one can consider the subset of paravector roots of  $P=\sum_{h=0}^dX^h a_h$:
\[V^{(4)}(P)=\{x\in\R^{(4)}\;|\; P(x)=\textstyle\sum_{h=0}^dx^h a_h=0\}\subset V(P).\]
Let $r$ be the number of real roots of $P$ (counted with multiplicity).
As shown in \cite{CoSaSt2009Israel}, if $V^{(4)}(P)$ contains two distinct roots in the same conjugacy class $\cS_y$, then $\cS_y\cap\R^{(4)}\subset V^{(4)}(P)$. We call these roots \emph{spherical roots} of $P$.  In this case, $P$ is divisible by the characteristic polynomial $\Delta_y(X)$ of $y$.
Let $s_y$ be the maximum exponent of a power of $\Delta_y$ dividing $P$. Let $s$ be the sum of integers $s_y$ when $y$ varies, without repetitions, in the conjugacy classes of non--real roots contained in $V^{(4)}(P)$. From Theorem~\ref{GordonMotzkin}, the number of these classes is at most $d$.
 We can get a more precise estimate, similar to what obtained in \cite{PogoruiShapiro} in the quaternionic case.

\begin{corollary}\label{GordonMotzkinParavectors}
Let $P(X)=\sum_{h=0}^dX^h a_h$ be a polynomial of positive degree $d$, with coefficients $a_h\in\R_{0,3}$. Let $r$ and $s$ be as before. Let $k$ be the number of non--real, non--spherical paravector roots of $P$. Then $r+2s+k\le d$.
\end{corollary}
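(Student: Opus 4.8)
The plan is to divide out of $P$ a suitable \emph{real} polynomial that accounts for all the real roots and for the spherical conjugacy classes, and then to apply Theorem~\ref{GordonMotzkin} to the complementary factor $P'$: the bound $r+2s+k\le d$ will come out as $k\le\deg P'=d-r-2s$.

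First I would assemble the real divisor. If $\rho\in\R$ is a real root of $P$ of multiplicity $m_\rho$, then, $X-\rho$ being central in $\R_{0,3}[X]$, the non--commutative Remainder Theorem gives $(X-\rho)^{m_\rho}\mid P$, and $\sum_\rho m_\rho=r$. If $\cS_y$ is the conjugacy class of a non--real paravector root which is a spherical root, then by \cite{CoSaSt2009Israel} $P$ is divisible by $\Delta_y^{s_y}$, monic of degree $2s_y$ with real coefficients. For a \emph{non}--spherical class one has $s_y=0$: were $\Delta_y$ to divide $P$, then since the zero locus of $\Delta_y$ in $\QQ_{0,3}$ is $\cS_y$ and $\R^{(4)}\cap\cS_y$ contains more than one point (e.g.\ $\alpha+\beta e_1\neq\alpha+\beta e_2$), $P$ would have two distinct paravector roots in $\cS_y$, i.e.\ $\cS_y$ would be spherical; hence the integer $s$ of the statement equals $\sum_{\cS_y\ \mathrm{spherical}}s_y$. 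The factors $X-\rho$ and $\Delta_y$ are pairwise coprime in $\R[X]$ (distinct real roots; $\Delta_y$ an irreducible real quadratic with $\Delta_y(\rho)>0$; distinct $\Delta_y$'s have distinct complex roots). Since $\R[X]$ is central in $\R_{0,3}[X]$, pairwise coprime central divisors of $P$ multiply: if $uf+vg=1$ with $u,v,f,g$ real and $P=fA=gB$, then $P=ufP+vgP=fg(uB+vA)$. Hence $D(X):=\prod_\rho(X-\rho)^{m_\rho}\prod_{\cS_y\ \mathrm{sph}}\Delta_y(X)^{s_y}$, monic real of degree $r+2s$, divides $P$; writing $P=D\punto P'$ gives $\deg P'=d-r-2s\ge0$, so in particular $r+2s\le d$, settling the case $k=0$.

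Next I would handle the non--real, non--spherical paravector roots $y_1,\dots,y_k$. Two distinct paravector roots of $P$ in a common class would be spherical (again \cite{CoSaSt2009Israel}), so $\cS_{y_1},\dots,\cS_{y_k}$ are pairwise distinct. I claim each $y_i\in V^{(4)}(P')$. Since $D$ has real coefficients, $0=P(y_i)=D(y_i)\,P'(y_i)$, so it suffices that $D(y_i)$ be invertible. Each factor $y_i-\rho$ is a nonzero paravector; and for a spherical class $\cS_y$ (with $\cS_y\neq\cS_{y_i}$, hence $\Delta_y(y_i)\neq0$) one gets, using $y_i^2=t(y_i)y_i-n(y_i)$ for $y_i\in\QQ_{0,3}$ and $t(y),n(y)\in\R$, that $\Delta_y(y_i)=(t(y_i)-t(y))y_i+(n(y)-n(y_i))$ is again a (nonzero) paravector. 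As any nonzero paravector $v$ satisfies $vv^c=n(v)=|v|^2>0$ and is therefore invertible, $D(y_i)$ is a product of invertibles, so $P'(y_i)=0$.

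Finally, when $k\ge1$ the factor $P'$ has positive degree, and Theorem~\ref{GordonMotzkin} applied to $P'$ shows that its roots in $\QQ_{0,3}$ — among them $y_1,\dots,y_k$, lying in $k$ distinct conjugacy classes — occupy at most $\deg P'=d-r-2s$ classes; hence $k\le d-r-2s$, i.e.\ $r+2s+k\le d$. The steps needing the most care are the reduction showing that non--spherical classes contribute $0$ to $s$, and the invertibility of $D(y_i)$, which rests entirely on the elementary fact that nonzero paravectors of $\R_{0,3}$ are invertible; the multiplicativity of coprime central divisors is routine once the centrality of $\R[X]$ is exploited.
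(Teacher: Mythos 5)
Your proof is correct and follows the same strategy as the paper's: factor out a real polynomial of degree $r+2s$ collecting the real and spherical roots, then apply Theorem~\ref{GordonMotzkin} to the quotient. You simply supply the details the paper's three-line proof leaves implicit (coprime central factors multiply, non-spherical classes have $s_y=0$, and $D(y_i)$ is invertible because nonzero paravectors of $\R_{0,3}$ are invertible, so the $y_i$ really are roots of the quotient), all of which check out.
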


\begin{proof}
We can factor out from $P$ a polynomial with real coefficients, of degree $r+2s$. The quotient is a polynomial of degree $d-r-2s$, to which Theorem~\ref{GordonMotzkin} applies. Therefore $k\le d-r-2s$, and the estimate is proved.
\end{proof}

\section{Main results}

\subsection*{Lagrange interpolation on the quaternionic space}

The problem of polynomial interpolation on quaternions has already been considered, usually studying properties of a quaternionic Vandermonde matrix. In \cite[\S16]{Lam1995} (in the general setting of division rings) and in \cite{RMXQLT1999}, it is proved that the problem has a unique solution if and only if the interpolation points are distinct and every conjugacy class contains at most two of the  points. Here we define the Lagrange interpolating polynomials, and give the supplementary condition that must be satisfied by the data to assure the existence of the solution when more than two points belong to the same conjugacy class.
This is a \emph{collinearity} condition involving also the values to be taken, coming from a property of polynomials with right coefficients (shared with the larger class of \emph{slice regular} functions, cf.~\cite{GeSt2007Adv,GhPe_AIM}): their restriction to each sphere $\cS_y$ is an affine function.

\begin{theorem}\label{LagrangeH}
Let $\cS_1,\ldots,\cS_n$ be pairwise distinct conjugacy classes of $\q$ and, for every $j \in \{1,\ldots,n\}$, let $x_{j1},\ldots,x_{jd_j}$ be pairwise distinct elements of $\cS_j$ with $d_j \geq 1$, let $w_{j1},\ldots,w_{jd_j}$ be arbitrary elements of $\q$ and let $d_j':=\min\{d_j,2\}$. Define $d:=-1+\sum_{j=1}^nd_j'$. Then there exists, and is unique, a quaternionic polynomial $P(X)=\sum_{h=0}^dX^h a_h$ of degree at most $d$ such that $P(x_{jk})=w_{jk}$ for each $j \in \{1,\ldots,n\}$ and $k \in \{1,\ldots,d_j\}$ if and only if, for every $j \in \{1,\ldots,n\}$ with $d_j \geq 3$, the following \emph{quaternionic collinear condition} $(\mc{C}_j)$ holds:
\[
(x_{j2}-x_{j1})^{-1}(w_{j2}-w_{j1})=(x_{jh}-x_{j1})^{-1}(w_{jh}-w_{j1})\quad \forall h \in \{3,\ldots,d_j\}.
\]
\end{theorem}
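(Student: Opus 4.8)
The plan is to reduce everything to the structural fact recalled just above the statement: a polynomial with right coefficients restricts to an affine function on each conjugacy class. First I would make this precise. Fix $j$ and write $\cS_j=\cS_{y_j}$ with $y_j=\alpha_j+\beta_j J$, $\alpha_j,\beta_j\in\R$, $\beta_j\ge0$, $J\in\cS$; note $\beta_j>0$ as soon as $d_j\ge2$, since a one-point class cannot contain two distinct points. For $x=\alpha_j+\beta_j K\in\cS_j$ the powers $x^h=(\alpha_j+\beta_j K)^h$ lie in the commutative field $\R+\R K$, so $x^h=u_h+v_hK$ where $u_h,v_h\in\R$ are the components of $(\alpha_j+\beta_j i)^h\in\C$ and do not depend on $K$. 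Hence, for $P(X)=\sum_{h=0}^dX^ha_h$,
\[
P(\alpha_j+\beta_jK)=A_j+K\,B_j,\qquad A_j:=\textstyle\sum_{h}u_ha_h,\quad B_j:=\textstyle\sum_{h}v_ha_h,
\]
with $A_j,B_j\in\q$ depending on $P$ and $\cS_j$ but not on $K$.

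\emph{Necessity.} If $P$ interpolates the data and $d_j\ge3$, then writing $x_{jk}=\alpha_j+\beta_jK_{jk}$ and subtracting $w_{j1}=A_j+K_{j1}B_j$ from $w_{jh}=A_j+K_{jh}B_j$ gives $w_{jh}-w_{j1}=(K_{jh}-K_{j1})B_j$, while $x_{jh}-x_{j1}=\beta_j(K_{jh}-K_{j1})$ is a nonzero, hence invertible, quaternion. Therefore $(x_{jh}-x_{j1})^{-1}(w_{jh}-w_{j1})=\beta_j^{-1}B_j$ for every $h$, which is exactly $(\mc{C}_j)$.

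\emph{Uniqueness} (valid without the $(\mc{C}_j)$). Next I would show that two interpolants $P_1,P_2$ of degree $\le d$ coincide. Their difference $P$ vanishes at every $x_{jk}$; for $d_j\ge2$, $A_j+KB_j$ vanishes at two distinct $K$'s, forcing $B_j=A_j=0$, so $P$ vanishes on all of $\cS_j$, and dividing $P$ by the central real polynomial $\Delta_{x_{j1}}$ the remainder $Xb+c$ must vanish on $\cS_j$ too, giving $b=c=0$, i.e.\ $\Delta_{x_{j1}}\mid P$. Put $r_2:=\#\{j:d_j\ge2\}$, $r_1:=\#\{j:d_j=1\}$, so $2r_2+r_1=\sum_jd_j'=d+1$; since the classes are distinct, the quadratics $\Delta_{x_{j1}}$ ($d_j\ge2$) are pairwise coprime in $\R[X]$, their product $g$ divides $P$, and writing $P=g\punto Q$ one has $P(x)=g(x)Q(x)$ with $\deg Q\le d-2r_2=r_1-1$. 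For $d_j=1$, $x_{j1}\in\cS_j$, so $\Delta_{x_{i1}}(x_{j1})\ne0$ whenever $d_i\ge2$ (its zero set being $\cS_i\ne\cS_j$), hence $g(x_{j1})$ is invertible and $Q(x_{j1})=0$; thus $Q$ has roots in $r_1$ distinct classes while $\deg Q\le r_1-1$, which forces $Q\equiv0$ by the classical quaternionic Gordon--Motzkin theorem (\cite{GordonMotzkin}; see also \cite[\S16.4]{Lam}, or the argument of Theorem~\ref{GordonMotzkin}), and hence $P\equiv0$. Equivalently, the $\R$--linear evaluation map $L\colon\q^{d+1}\to\q^{\sum_jd_j}$, $L(a_0,\dots,a_d)=\big(\sum_hx_{jk}^ha_h\big)_{j,k}$, is injective.

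\emph{Existence.} Finally, existence would follow from a dimension count. By the previous step, $\im L$ is a real subspace of dimension $4(d+1)$, contained (by necessity) in the subspace $Z$ of data $(w_{jk})$ satisfying all the conditions $(\mc{C}_j)$. For each $j$ with $d_j\ge3$ the map $(w_{jk})_k\mapsto\big((x_{jh}-x_{j1})^{-1}(w_{jh}-w_{j1})-(x_{j2}-x_{j1})^{-1}(w_{j2}-w_{j1})\big)_{h=3}^{d_j}$ is $\R$--linear and onto $\q^{d_j-2}$ (take $w_{j1}=w_{j2}=0$), so $Z$ has real codimension $\sum_j4(d_j-d_j')=4(\sum_jd_j-(d+1))$, i.e.\ dimension $4(d+1)$. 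Since $\im L\subseteq Z$ with equal dimensions, $\im L=Z$, which yields the desired interpolant. The hard part is the uniqueness step: one must organize the divisibility bookkeeping so that the degree bound $\deg Q\le r_1-1$ can be played against Gordon--Motzkin; once $L$ is injective, existence is essentially automatic from the codimension count, and explicit Lagrange-type basis polynomials — for each class, a product of the characteristic polynomials of the remaining classes times an affine correction on that class, suitably normalized — can then be written down to justify the name.
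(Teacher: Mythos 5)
Your proof is correct, but it takes a genuinely different route from the paper's. The paper proves existence \emph{constructively}: it orders the classes so that those with a single node come first, builds from the product formula \eqref{product} polynomials $P_j$, $P_{k1}$, $P_{k2}$ with prescribed zero sets, multiplies them by the real characteristic polynomials $\Delta_{x_{i1}}$ of the other two-node classes, normalizes, and sums to get explicit Lagrange basis polynomials; the collinearity conditions then enter exactly as in your necessity step (the affine restriction $P|_{\cS_y}(x)=xa+b$, which is your $A_j+KB_j$ in the variable $x$ rather than $K$), and uniqueness is delegated to the Pogorui--Shapiro root count. You instead prove uniqueness directly — vanishing on a whole class forces divisibility by its characteristic polynomial, the pairwise-coprime real quadratics factor out, and the residual degree bound $\deg Q\le r_1-1$ collides with the classical Gordon--Motzkin theorem — and then get existence for free from the injectivity of the evaluation map plus the codimension count $\dim Z=4(d+1)$. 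Your argument is shorter, self-contained (no appeal to \cite{PogoruiShapiro}), and cleanly separates the ``only if'' direction, which in the paper is slightly entangled with the particular constructed interpolant; what it does not deliver is the explicit Lagrange basis that gives the paper its title and powers its worked examples, a point you rightly flag at the end. All the individual steps check out: $\beta_j>0$ when $d_j\ge2$, the cancellation $(K_{jh}-K_{j1})^{-1}(K_{jh}-K_{j1})$ in the necessity computation, the coprimality argument for the product of the $\Delta$'s (which uses their centrality), and the surjectivity of the collinearity defect map onto $\q^{d_j-2}$.
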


\begin{proof} 
Up to reordering indices, we can assume that $d_1=d_2=\ldots=d_m=1$ and $d_{m+1} \geq 2$, \dots, $d_n \geq 2$ for some $m \in \{0,1,\ldots,n\}$.

Given $T \in \q[X]$ and $y\in \q$, with $V(T)\cap\cS_y=\emptyset$, it follows from formula \eqref{product} that the polynomial  
\[S(X):=T(X)\punto(X-T(y)^{-1}y T(y))\]
vanishes exactly on $V(T) \cup \{y\}$. 
Then we can find, for each $j \in \{1,\ldots,m\}$ and for each $k \in \{m+1,\ldots,n\}$, polynomials $P_j$, $P_{k,1}$, $P_{k,2} \in  \q[X]$ such that
\[
V(P_j)=\{x_{11},\ldots,x_{(j-1)1},x_{(j+1)1},\ldots,x_{m1}\},
\]
\[
V(P_{k1})=\{x_{11},\ldots,x_{m1}\} \cup \{x_{k2}\},\quad
V(P_{k2})=\{x_{11},\ldots,x_{m1}\} \cup \{x_{k1}\}.
\]
For each $j \in \{1,\ldots,m\}$ and $k \in \{m+1,\ldots,n\}$, define the quaternionic Lagrange polynomials 
\begin{align*}
L_j(X)&:=\Delta_{x_{(m+1)1}}(X)\cdots\Delta_{x_{n1}}(X)\, P_j(X)\, a_j,
\\
L'_k(X)&:=\Delta_{x_{(m+1)1}}(X)\cdots\Delta_{x_{(k-1)1}}(X)\Delta_{x_{(k+1)1}}(X)\cdots\Delta_{x_{n1}}(X)\,P_{k1}(X)\, b_{k1},
\\
L''_k(X)&:=\Delta_{x_{(m+1)1}}(X)\cdots\Delta_{x_{(k-1)1}}(X)\Delta_{x_{(k+1)1}}(X)\cdots\Delta_{x_{n1}}(X) \,P_{k2}(X)\, b_{k2},
\end{align*}
where $a_j:=\left(\Delta_{x_{(m+1)1}}(x_{j1})\cdots\Delta_{x_{n1}}(x_{j1})P_j(x_{j1})\right)^{-1}$
and 
\[
b_{k\ell}:=\left(\Delta_{x_{(m+1)1}}(x_{k\ell})\cdots\Delta_{x_{(k-1)1}}(x_{k\ell})\Delta_{x_{(k+1)1}}(x_{k\ell})\cdots\Delta_{x_{n1}}(x_{k\ell})P_{k\ell}(x_{k\ell})\right)^{-1}
\]
with $\ell \in \{1,2\}$. Then
\[\textstyle
P:=\sum_{j=1}^mL_j  w_{j1}+\sum_{k=m+1}^n(L'_k  w_{k1}+L''_k  w_{k2})
\]
is an interpolating polynomial for points $\{x_{i1}\}_{i=1,\ldots, m}\cup\{x_{jh}\}_{j=m+1,\ldots, n}^{h=1,2}$. If there are conjugacy classes containing more than two data points, it remains to prove that $P$ interpolates also at the points $\{x_{jh}\}_{j=m+1,\ldots, n}^{h=3,\ldots,d_j}$. Since the restriction of $P$ to each sphere $\cS_y$ is an affine function, there exist  $a,b\in\q$ such that $P(x)=xa+b$  for every $x\in\cS_y$. For the sphere $\cS_{x_{jh}}$ the constants $a$ and $b$ are given by
\[a=(x_{j2}-x_{j1})^{-1}(w_{j2}-w_{j1}),\quad b=w_{j1}-x_{j1}a\,.
\]
Therefore,   the set of equalities $P(x_{jh})=x_{jh}a+b=w_{jh}$ for $h\in\{3,\ldots, d_j\}$, is equivalent to the collinearity condition ($\mc C_j$).

The uniqueness of the interpolating polynomial comes from the estimate on the number of roots of a quaternionic polynomial proved by Pogorui and Shapiro \cite{PogoruiShapiro}.
\end{proof}

We give an example of the procedure described in the proof for five points in $\q\times\q$ satisfying the collinearity conditions.

\begin{example}
Let $x_{11}=0$, $x_{21}=1+i$, $x_{31}=i$, $x_{32}=j$, $x_{33}=k$. Consider the values $w_{11}=1$, $w_{21}=-1$, $w_{31}=1$, $w_{32}=k$, $w_{33}=-j$. Note that $x_{31},x_{32},x_{33}$ belong to the same conjugacy class $\cS$, with characteristic polynomial $X^2+1$. The relative collinearity condition is satisfied:
\[(x_{32}-x_{31})^{-1}(w_{32}-w_{31})=-i=(x_{33}-x_{31})^{-1}(w_{33}-w_{31}).\]
We construct the Lagrange polynomials $L_1,L_2,L_3',L_3''$. We set:
\begin{align*}
P_1(X)&:=X-x_{21}=X-1-i\\
L_1^*(X)&:=(X^2+1)P_1(X)=X^3-X^2(1+i)+X-1-i\\
L_1(X)&:=L_1^*(X)L_1^*(x_{11})^{-1}=L_1^*(X)(-1-i)^{-1}=X^3\left(\tfrac{i-1}2\right)+X^2+X\left(\tfrac{i-1}2\right)+1\\
\text{and}\\
P_2(X)&:=X-x_{11}=X\\
L_2^*(X)&:=(X^2+1)P_2(X)=X^3+X\\
L_2(X)&:=L_2^*(X)L_2^*(x_{21})^{-1}=L_2^*(X)(-1+3i)^{-1}=X^3\left(\tfrac{-1-3i}{10}\right)+X\left(\tfrac{-1-3i}{10}\right).
\end{align*}
Moreover, let
\begin{align*}
Q(X)&:=(X-x_{11})\punto (X-x_{21})=X^2-X(1+i)\\
P_{31}(X)&:=Q(X)\punto (X-Q(x_{32})^{-1}x_{32}Q(x_{32}))=X^3+X^2\left(\tfrac{-3-i-j+2k}3\right)-X\left(\tfrac{-2+2i-3j+k}3\right)\\
L_3'(X)&:=P_{31}(X)P_{31}(x_{31})^{-1}=X^3\left(\tfrac{5-2j-k}{10}\right)+X^2\left(\tfrac{-1+k}2\right)+X\left(\tfrac{5-5i+3j-k}{10}\right)\\
\text{and}\\
P_{32}(X)&:=Q(X)\punto (X-Q(x_{31})^{-1}x_{31}Q(x_{31}))=X^3-X^2(1+2i)+X(-1+i)\\
L_3''(X)&:=P_{32}(X)P_{32}(x_{32})^{-1}=X^3\left(\tfrac{1-2i+2j+k}{10}\right)-X^2\left(\tfrac{1+k}2\right)+X\left(\tfrac{1+3i-3j+k}{10}\right).
\end{align*}
Finally, we get the unique interpolating polynomial of degree 3
\begin{align*}
P(X)&:=L_1(X)w_{11}+L_2(X)w_{21}+L_3'(X)w_{31}+L_3''(X)w_{32}=\\
&=L_1(X)-L_2(X)+L_3'(X)+L_3''(X)k=X^3i+X^2+1.
\end{align*}

\end{example}

\subsection*{Lagrange interpolation on the Clifford algebra $\R_{0,3}$}

We now perform the construction of Lagrange polynomials for a set of points in $\R_{0,3}$. Due to the presence of zero--divisors, we must restrict to points belonging to different conjugacy classes.

\begin{theorem}\label{Lagrange1}
Let $x_{1},\ldots,x_{m}$ be pairwise distinct elements of $\QQ_{0,3}$. Assume that $x_{j}\in\cS_j$ for each  $j \in \{1,\ldots,m\}$, with  $\cS_1,\ldots,\cS_m$ pairwise distinct conjugacy classes. Let $w_{1},\ldots,w_{m}$ be arbitrary elements of $\R_{0,3}$. Define $d:=m-1$. Then there exists a unique polynomial $P(X)=\sum_{h=0}^dX^h a_h$ with coefficients $a_h\in\R_{0,3}$, of degree at most $d$, such that $P(x_{j})=w_{j}$ for each $j \in \{1,\ldots,m\}$. 
\end{theorem}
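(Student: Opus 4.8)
The plan is to reduce the statement to the quaternionic interpolation Theorem~\ref{LagrangeH} via the algebra isomorphism $\R_{0,3}\simeq\q\oplus\q$ recalled in the Remark above. Fix such an isomorphism, write $\chi_+,\chi_-\colon\R_{0,3}\to\q$ for the two $\R$--algebra projections, and set $x^\pm:=\chi_\pm(x)$ for $x\in\R_{0,3}$. Three compatibilities have to be checked. First, since the central idempotents $\frac12(1\pm e_{123})$ correspond to $(1,0)$ and $(0,1)$ and the Clifford anti--involution fixes the center of $\R_{0,3}$ pointwise, it is carried to the componentwise quaternionic conjugation, $x^c\leftrightarrow(\overline{x^+},\overline{x^-})$; hence $t(x)\leftrightarrow(t(x^+),t(x^-))$ and $n(x)\leftrightarrow(n(x^+),n(x^-))$, the symbols on the right denoting the quaternionic trace and norm. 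Second, as a consequence, $x\in\QQ_{0,3}$ (that is, $t(x),n(x)\in\R$) if and only if $x^+$ and $x^-$ have the same quaternionic trace and the same quaternionic norm, i.e.\ lie in a common conjugacy class of $\q$; and for $x,y\in\QQ_{0,3}$ one has $\cS_x=\cS_y$ if and only if $x^+$ is conjugate to $y^+$ in $\q$ (equivalently $x^-$ to $y^-$), both conditions amounting to $t(x)=t(y)$ and $n(x)=n(y)$. Third, the isomorphism induces an isomorphism $\R_{0,3}[X]\simeq\q[X]\oplus\q[X]$ of rings which respects the product $\punto$ and evaluation, $P(x)\leftrightarrow(P^+(x^+),P^-(x^-))$, and sends a polynomial $\sum_{h=0}^{d}X^ha_h$ over $\R_{0,3}$ to the pair $\bigl(\sum_{h=0}^{d}X^ha_h^+,\sum_{h=0}^{d}X^ha_h^-\bigr)$, so that degree bounds pass componentwise.

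Granting this, the argument is short. Put $x_j^\pm:=\chi_\pm(x_j)$ and $w_j^\pm:=\chi_\pm(w_j)$. By the second compatibility, $x_1^+,\dots,x_m^+$ lie in $m$ pairwise distinct conjugacy classes of $\q$, and likewise $x_1^-,\dots,x_m^-$. Apply Theorem~\ref{LagrangeH} to each of these data sets, taking $n=m$ and all $d_j=1$: then $d_j'=1$ for every $j$, the degree bound $-1+\sum_{j=1}^{m}d_j'$ equals $m-1=d$, and the collinearity conditions $(\mc C_j)$ are vacuous since no class carries three or more points. This yields a unique $P^+(X)=\sum_{h=0}^{d}X^ha_h^+\in\q[X]$ of degree at most $d$ with $P^+(x_j^+)=w_j^+$ for all $j$, and similarly a unique $P^-$. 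Let $P(X)=\sum_{h=0}^{d}X^ha_h$ be the polynomial over $\R_{0,3}$ determined by $\chi_\pm(a_h)=a_h^\pm$. By the third compatibility, $\chi_\pm(P(x_j))=P^\pm(x_j^\pm)=w_j^\pm$, hence $P(x_j)=w_j$ for every $j$; this proves existence. If $\widetilde P$ is another polynomial of degree at most $d$ with $\widetilde P(x_j)=w_j$ for all $j$, then $\widetilde P^{\,+}$ and $\widetilde P^{\,-}$ interpolate the same quaternionic data within degree $d$, so $\widetilde P^{\,\pm}=P^\pm$ by the uniqueness part of Theorem~\ref{LagrangeH}, and therefore $\widetilde P=P$.

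I expect the only genuine work to be the verification of the dictionary above, and in particular of how the Clifford anti--involution behaves under $\R_{0,3}\simeq\q\oplus\q$; once an explicit isomorphism is written out (as in \cite{GHS}) this is routine, and it is exactly what underlies the alternative proofs of Propositions~\ref{pro1} and~\ref{invertible} indicated in the Remark. One could also give a self--contained argument parallel to the quaternionic proof: for each $j$ build a polynomial $L_j$ of degree $m-1$ with $L_j(x_i)=0$ for $i\neq j$ and $L_j(x_j)$ invertible, by iterating the one--factor step $T(X)\mapsto T(X)\punto\bigl(X-T(y)^{-1}yT(y)\bigr)$ over the points $x_i$ with $i\neq j$ (Theorem~\ref{GordonMotzkin} guaranteeing at each stage that the roots accumulated so far miss the conjugacy class of the next point), then normalize so that $L_j(x_i)=\delta_{ij}$ and set $P=\sum_{j=1}^{m}L_jw_j$. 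The obstacle in this second route -- and the reason the $\q\oplus\q$ splitting is the cleaner path -- is that one must check the \emph{invertibility}, not merely the non--vanishing, of the intermediate values $T(x_i)\in\R_{0,3}$, which is automatic over the division algebra $\q$ but not over $\R_{0,3}$; this is precisely the point that the splitting disposes of for free.
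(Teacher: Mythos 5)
Your argument is correct, but it follows a genuinely different route from the paper. You reduce Theorem~\ref{Lagrange1} to the quaternionic Theorem~\ref{LagrangeH} through the splitting $\R_{0,3}\simeq\q\oplus\q$, after checking that the anti-involution becomes componentwise quaternionic conjugation, that membership in $\QQ_{0,3}$ means the two components share a conjugacy class of $\q$ (so distinct classes $\cS_j$ split into distinct quaternionic classes on each factor), and that $\punto$, evaluation and degree bounds pass to the components; all of these verifications are sound, and the existence and uniqueness then come for free from the two quaternionic interpolation problems. The paper instead works intrinsically in $\R_{0,3}$: it builds each Lagrange basis polynomial $L_j$ by iterating the one-factor step $T(X)\mapsto T(X)\punto\bigl(X-T(y)^{-1}yT(y)\bigr)$, and the issue you correctly single out as the obstacle of this route --- that one must control \emph{invertibility}, not mere non-vanishing, of the intermediate values $T(x_i)$ --- is exactly what Lemma~\ref{poly} is designed to settle, via Proposition~\ref{invertible}; uniqueness is then deduced from the Gordon--Motzkin-type Theorem~\ref{GordonMotzkin} rather than from quaternionic uniqueness. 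The trade-off: your splitting argument is shorter and disposes of all zero-divisor difficulties at once, at the cost of writing out the $\q\oplus\q$ dictionary (which the paper only invokes in a remark as an alternative for the preliminary propositions) and of being special to $\R_{0,3}$; the paper's construction is explicit (it produces the Lagrange polynomials actually computed in the example) and its supporting lemmas are phrased in a form that isolates precisely which properties of the algebra are used. One small caveat on your sketch of the second route: what guarantees that the accumulated zero set avoids $\cS_{x_k}$ at each stage is not Theorem~\ref{GordonMotzkin} but the invertibility statement of Lemma~\ref{poly} itself, applied inductively together with the hypothesis that the classes $\cS_1,\ldots,\cS_m$ are pairwise distinct.
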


In order to prove the theorem, we need a preliminary result.

\begin{lemma}\label{poly}
Given a polynomial $P(X)=\sum_{h=0}^dX^h a_h$ with coefficients $a_h\in\R_{0,3}$ and $y \in\QQ_{0,3}$, with $V(P)\cap\cS_y=\emptyset$ and $P(y)$ invertible, the polynomial $Q(X):=P(X)\punto(X-P(y)^{-1}y P(y))$ vanishes on $V(P) \cup \{y\}$, and $Q(x)$ is invertible for each $x\notin\cS_y$ such that $P(x)$ is invertible.
\end{lemma}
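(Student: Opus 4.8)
The plan is to verify the three assertions of Lemma~\ref{poly} in turn, relying on formula~\eqref{product} and on Proposition~\ref{invertible}. First I would record that, since $P(y)$ is invertible by hypothesis, the element $y':=P(y)^{-1}yP(y)$ is well defined and lies in $\cS_y$: indeed conjugation by an invertible element preserves the trace and the norm, hence preserves the characteristic polynomial, so $\Delta_{y'}=\Delta_y$ and $y'\in\cS_{y'}=\cS_y\subset\QQ_{0,3}$. In particular the linear factor $X-y'$ makes sense as a polynomial with Clifford coefficients, and $Q:=P\punto(X-y')$ is a well-formed element of $\R_{0,3}[X]$.

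Next I would evaluate $Q$ at a point $x\in\QQ_{0,3}$. For $x$ with $P(x)$ invertible, formula~\eqref{product} gives
\[
Q(x)=(P\punto(X-y'))(x)=P(x)\bigl(P(x)^{-1}xP(x)-y'\bigr).
\]
From this expression the first two claims follow quickly. If $x\in V(P)$ then $P(x)=0$; but $Q(X)=P(X)\punto(X-y')$ and $P$ has a (left) factor vanishing at $x$, so one checks directly that $Q(x)=0$ — more carefully, one uses that $Q(X)=P(X)\punto(X-y')$ and the Remainder/factor structure to see $x\in V(P)\Rightarrow x\in V(Q)$, or simply notes that the displayed formula is only valid when $P(x)$ is invertible and instead argues on $V(P)$ by the fact that $Q=P\punto(X-y')$ literally has $P$ as a left factor in the $\punto$-product, so any left root of $P$ is a left root of $Q$. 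For $x=y$: $P(y)$ is invertible, so the displayed formula applies and $Q(y)=P(y)\bigl(P(y)^{-1}yP(y)-y'\bigr)=P(y)\bigl(y'-y'\bigr)=0$. Hence $Q$ vanishes on $V(P)\cup\{y\}$.

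For the invertibility statement, take $x\notin\cS_y$ with $P(x)$ invertible. Then the displayed formula gives $Q(x)=P(x)\bigl(P(x)^{-1}xP(x)-y'\bigr)$, a product of two factors; $P(x)$ is invertible by assumption, so it suffices to show the second factor $P(x)^{-1}xP(x)-y'$ is invertible. Set $x':=P(x)^{-1}xP(x)$. As above, conjugation preserves trace and norm, so $x'\in\cS_x\subset\QQ_{0,3}$, and $\cS_{x'}=\cS_x\ne\cS_y=\cS_{y'}$ because $x\notin\cS_y$. Now $x'$ and $y'$ are two elements of $\QQ_{0,3}$ lying in distinct conjugacy classes, so Proposition~\ref{invertible} yields that $x'-y'$ is invertible. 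Therefore $Q(x)$ is a product of two invertible elements of $\R_{0,3}$, hence invertible.

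The one point that needs slight care — and is the main (mild) obstacle — is the handling of $Q$ on $V(P)$, since formula~\eqref{product} is only stated for points at which the first factor is invertible, and that fails precisely on $V(P)$. The clean fix is to avoid~\eqref{product} there and argue structurally: writing $P\punto(X-y')$ and expanding, $Q(X)=\sum_h X^{h+1}a_h-\bigl(\sum_h X^h a_h\bigr)\cdot$(stuff), but more simply one invokes that in this polynomial ring with right coefficients, if $x$ is a left root of the left factor $P$ of a $\punto$-product $Q=P\punto R$, then $x$ is a left root of $Q$; this is immediate from the definition of $\punto$ (cf.~\cite[\S16]{Lam}). With that observation in place, all three assertions of the lemma are established.
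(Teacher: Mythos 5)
Your proof is correct and follows essentially the same route as the paper: the paper establishes the vanishing on $V(P)\cup\{y\}$ by writing out the explicit expansion $Q(X)=XP(X)-P(X)P(y)^{-1}yP(y)$ (which is exactly your ``left root of the left factor'' observation made concrete), and proves the invertibility claim via the same factorization $Q(x)=P(x)\bigl(P(x)^{-1}xP(x)-P(y)^{-1}yP(y)\bigr)$ together with Proposition~\ref{invertible}. Your additional care in checking that the conjugates $P(x)^{-1}xP(x)$ and $P(y)^{-1}yP(y)$ lie in $\QQ_{0,3}$ and in the classes $\cS_x$, $\cS_y$ (so that Proposition~\ref{invertible} is actually applicable) is left implicit in the paper and is a welcome precision.
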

\begin{proof}
The first part follows from the equality 
\[Q(X)=X P(X)-P(X)P(y)^{-1}y P(y).\]
For each $x\in\QQ_{0,3}$ such that $P(x)$ is invertible, it holds
\[
Q(x)=P(x)(P(x)^{-1}xP(x)-P(y)^{-1}y P(y)).\]
If $x\notin\cS_y$, the latter equality and Proposition~\ref{invertible} give the invertibility of $Q(x)$.
\end{proof}

\begin{proof}[Proof of Theorem~\ref{Lagrange1}]
Let $P^{(1)}_m(X):=X-x_1$ and define recursively, for $k=2,\ldots,m-1$,  the polynomials 
\[P^{(k)}_m(X):=P^{(k-1)}_m(X)\punto (X-P_m^{(k-1)}(x_k)^{-1}x_k P_m^{(k-1)}(x_k)).
\]
Note that $P_m^{(k-1)}(x_l)$ is invertible for every $l=k,\ldots,m$, as can be seen applying inductively Lemma~\ref{poly}.
 The polynomial $P_m:=P_m^{(m-1)}$ vanishes at $x_1,\ldots,x_{m-1}$ and $P_m(x_m)$ is invertible.
We can then define the $m$-th Lagrange polynomial $L_m(X):=P_m(X)P_m(x_m)^{-1}$.

For each $j\in\{1,\ldots,m-1\}$, we can define similarly the Lagrange polynomials $L_1,\ldots, L_{m-1}$. Finally, we set $P(X):=\sum_{j=1}^m L_j(X)w_j$.

The uniqueness of the interpolating polynomial follows immediately from Theorem~\ref{GordonMotzkin}.
\end{proof}

As an illustration of the procedure described in the proof of Theorem~\ref{Lagrange1}, we give an example of Lagrange interpolation for three points in $\R_{0,3}\times\R_{0,3}$. 

\begin{example}
Let $x_1=e_1$, $x_2=e_2+e_{23}$, $x_3=-1$ in $\QQ_{0,3}$ and let $w_1=1$, $w_2=2e_{23}$, $w_3=e_1$. We construct the Lagrange polynomial $L_3$:
\begin{align*}
P_3^{(1)}(X)&:=X-e_1\\
P_3(X)&:=P_3^{(2)}(X):=(X-e_1)\punto (X-(e_2+e_{23}-e_1)^{-1}(e_2+e_{23})(e_2+e_{23}-e_1))=\\
&=X^2+X\left(\frac{e_1}{5}-\frac{3 e_2}{5}+\frac{2 e_1 e_3}{5}-\frac{e_2 e_3}{5}\right)+\left(\frac{6}{5}+\frac{3 e_1 e_2}{5}+\frac{2 e_3}{5}+\frac{1}{5} e_1 e_2 e_3\right)\\
L_3(X)&:=P_3(X)P_3(-1)^{-1}=X^2a_3^2+Xa_3^1+a_3^0, \quad\text{with}\\
a_3^0&=\tfrac1{30}(16+4 e_1-3 e_2+3 e_1 e_2+2 e_3+2 e_1 e_3+e_2 e_3+e_1 e_2 e_3),\\
a_3^1&=\tfrac1{30}(-3+5 e_1-6 e_2+4 e_1 e_3+2 e_1 e_2 e_3),\\
a_3^2&=\tfrac1{30}(11+e_1-3 e_2-3 e_1 e_2-2 e_3+2 e_1 e_3-e_2 e_3+e_1 e_2 e_3).
\end{align*}
Similarly, we compute the Lagrange polynomials $L_1$ and $L_2$:
\begin{align*}
L_1(X)&:=X^2a_1^2+Xa_1^1+a_1^0, \quad\text{with}\\
a_1^0&=\tfrac1{20}(8-8 e_1+6 e_2-6 e_1 e_2-4 e_3-4 e_1 e_3-2 e_2 e_3-2 e_1 e_2 e_3),\\
a_1^1&=\tfrac1{20}(6-10 e_1+12 e_2-8 e_1 e_3-4 e_1 e_2 e_3),\\
a_1^2&=\tfrac1{20}(-2-2 e_1+6 e_2+6 e_1 e_2+4 e_3-4 e_1 e_3+2 e_2 e_3-2 e_1 e_2 e_3),\\
\text{and\quad}L_2(X)&:=X^2a_2^2+Xa_2^1+a_2^0, \quad\text{with}\\
a_2^0&=\tfrac1{15}(1+4 e_1-3 e_2+3 e_1 e_2+2 e_3+2 e_1 e_3+e_2 e_3+e_1 e_2 e_3),\\
a_2^1&=\tfrac1{15}(-3+5 e_1-6 e_2+4 e_1 e_3+2 e_1 e_2 e_3),\\
a_2^2&=\tfrac{1}{15} (-4+e_1-3 e_2-3 e_1 e_2-2 e_3+2 e_1 e_3-e_2 e_3+e_1 e_2 e_3).
\end{align*}
Finally, we get the interpolating polynomial 
\begin{align*}
P(X)&:=\sum_{j=1}^m L_j(X)w_j:=X^2a_2+Xa_1+a_0, \quad\text{with}\\
a_0&=\tfrac{1}{15} (6 e_1+8 e_2-5 e_1 e_2-10 e_3-2 e_1 e_3+4 e_2 e_3+5 e_1 e_2 e_3),\\
a_1&=\tfrac1{15}(-6-9 e_1+13 e_2+3 e_1 e_2-8 e_3-12 e_1 e_3-e_2 e_3+9 e_1 e_2 e_3),\\
a_2&=\tfrac1{15}(-6+5 e_2+8 e_1 e_2+2 e_3-10 e_1 e_3-5 e_2 e_3+4 e_1 e_2 e_3).
\end{align*}
Observe that the expansion of $P(X)$ w.r.t.\ the eight real coordinates contains 239 terms.

\end{example}



\end{document}